\newcommand{\norm}[1]{\left \lVert #1 \right \rVert}
\newenvironment{manualtheorem}[1]{%
  \manualtheoreminner
}{\endmanualtheoreminner}
\newtheorem{lemma}{Lemma}[section]
\newtheorem{corollary}[lemma]{Corollary}
\theoremstyle{definition}
\newtheorem{remark}[lemma]{Remark}
\numberwithin{equation}{section}
\begin{document}

\title[Symmetric power of higher dimensional varieties]{Symmetric power of higher dimensional varieties}

\author[A. Bansal]{Ashima Bansal}

\address{Indian Institute of Science Education and Research Tirupati, Srinivasapuram, Yerpedu Mandal, Tirupati Dist, Andhra Pradesh, India – 517619.}

\email{ashimabansal@students.iisertirupati.ac.in}

\author[S. Sarkar]{Supravat Sarkar}

\address{\scshape Fine Hall, Princeton, NJ 700108}

\email{ss6663@princeton.edu}

\author[S. Vats]{Shivam Vats}


\email{shivamvatsaaa@gmail.com}

\subjclass[2010]{14B05, 14J17}

\keywords{Stratification, Symmetric power, discrepancy}

\begin{abstract}
We study several properties of the symmetric power $S^mX$ of a smooth variety $X$. We describe the Picard and divisor class groups of $S^mX$ when $X$ is projective. We give a complete description of the stratification of $S^mX$ by iterated singular locus in terms of some combinatorial data regarding partitions of the integer $m.$ This gives a new viewpoint of a natural stratification of $S^mX$ by multiplicities.
 \end{abstract}
\maketitle

\section{Introduction}
\noindent Taking quotients under group actions is a ubiquitous topic in Mathematics. Various spaces parametrizing certain mathematical objects are constructed as group quotients. In algebraic geometry, there is the whole Geometric Invariant Theory, which defines and studies quotients of a variety or scheme under the action of an algebraic group. The most basic case of group quotient in algebraic geometry is the quotient of a quasi-projective variety under the action of a finite group. There are a lot of examples which come up in several areas. One of the most studied examples is that of weighted projective space, which is a quotient of a projective space under the action of a cyclic group. As weighted projective spaces are toric, they have been extensively studied in the literature, and much is known about them for a long time, see for example \cite{Do}.

However, there is another example of a group quotient which occurs very naturally. For a positive integer $m$, let $ S_m$ denote the permutation group of $m$ letters. Given a variety $X$, one defines the $m$'th symmetric power of $X$ by $S^mX\,=\,X^m/S_m,$ where $S_m$ acts on $X^m$ by permuting the factors. If $X$ is a projective variety over an algebraically closed field, the variety $S^mX$ parametrizes effective $0$-dimensional algebraic cycles on $X$ of degree $m$. Its elements can be written as $\sum_{i=1}^m x_i$ with $x_i\,\in\, X$. If $X$ is a smooth curve, then a remarkable property is that $S^mX$ is smooth for all $m$. For $X$ a smooth projective curve, the smooth projective variety $S^mX$ is very well studied in terms of the Abel-Jacobi map $S^mX\to$ Jac$^m(X)$, where Jac$^m(X)$ is the variety parametrizing degree $m$ line bundles on $X.$ This has connections to many other areas like topology and physics, as explained in \cite{BGZ}.

But if dim $X\,\geq\, 2$, $S^mX$ is singular. In this situation, the study of $S^mX$ is more delicate. It is an example of Chow variety and occurs naturally while studying the Hilbert scheme of points on a smooth projective variety via the Hilbert-Chow morphism, see \cite{fogarty1969truncated}, \cite{MFK}.

The goal of this article is to study several aspects of the symmetric power of smooth varieties. We have three results: Theorems 1 to 3. Among them, Theorems \ref{A} and \ref{C} are easy consequences of some previous works. The main new result in this paper is Theorem \ref{B}.

We work throughout over the field $\mathbb{C}$ of complex numbers. So, a variety can be thought of as a complex analytic space. All the varieties in this paper are assumed to be quasi-projective. Whenever we talk about the fundamental group, singular homology/cohomology of a variety, it is with respect to the analytic topology. Our first result describes the Picard group and class group of the symmetric power of smooth projective varieties. Throughout the paper, for a normal variety $Z$, Pic $Z$, Cl $Z$ stands for the Picard group and divisor class group of $Z$, respectively, and $K_Z\in$ Cl $Z$ is the canonical class.

\begin{manualtheorem}{1}\label{A}

Let $Y$ be a smooth projective variety of dimension $n\, \geq\, 2$. Let $m\,\geq\, 2$ be an integer and $ X\,=\, S^{m}Y$. Then we have the following:
\begin{enumerate}
\item[$(i)$] $\textnormal{Pic}\,X \,\cong\, \textnormal{Pic}\,Y \,\times\,$ \textnormal{NS(Alb}$(Y))$, where \textnormal{NS(Alb}$(Y))$ is the N{\'e}ron-Severi group of the Albanese variety of $Y$. 
\item[$(ii)$] The standard inclusion $ \textnormal{Pic}\, X\, \hookrightarrow\, \textnormal{Cl}\,X$ induces a direct sum $$\textnormal{Cl}\, X \,\cong\, \textnormal{Pic}\,X\, \oplus\, \mathbb{Z}/{2}\mathbb{Z}\, \cong\, \textnormal{Pic}\,Y\, \oplus\, \textnormal{NS(Alb($Y$))} \,\oplus\,\mathbb{Z}/{2}\mathbb{Z}.$$
\item[$(iii)$] Under the isomorphism in $(ii)$, $ K_{X}\, \in\, \textnormal{Cl}\,X$  corresponding to 
\begin{equation*}
 \begin{cases}
(K_{Y}, 0, \Bar{1}), & \quad \textnormal{if $n$ is odd},\\
(K_{Y}, 0, \Bar{0}), & \quad\textnormal{if $n$ is even}, 
\end{cases}
\end{equation*}
in $\textnormal{Pic}\,Y\, \oplus\, \textnormal{NS(Alb}(Y)) \,\oplus\, \mathbb{Z}/{2\mathbb{Z}}.$
         
\end{enumerate}
 \end{manualtheorem}

Part $(i)$ of the above theorem was already shown in \cite{Fo}. Our results are parts $(ii)$ and $(iii).$

 Next, we study the singular locus of a symmetric power more closely. For a reduced scheme $X$ of finite type over $\mathbb{C}$, let Sing $X$ be the singular locus of $X$ with reduced subscheme structure. For nonnegative integers $t$, define Sing$^t(X)$ inductively by Sing$^0(X)\,=\,X$, and Sing$^t(X)\, = \,$ Sing (Sing$^{t-1} X)$ for $t\,\geq\, 1$. So, $ \textnormal{Sing}^{t}(X)\setminus \textnormal{Sing}^{t+1}(X)$ is smooth for $t\,\geq\, 0$, and the irreducible/connected components of $ \textnormal{Sing}^{t}(X)\setminus \textnormal{Sing}^{t+1}(X)$ for varying $t$ gives a stratification of $X$ into locally closed smooth subvarieties. In this article, we completely describe this stratification for $X\,=\,S^mY$ for a smooth variety $Y$ in terms of some combinatorial data regarding partitions of the integer $m.$
 
To state our result, we need to introduce some notations. For a positive integer $m,$ let $[m]$ denote the set $\{1,2,\cdots , m\}$. A partition $\pi$ of $m$ is a tuple $\pi\,=\, ( a_{1},a_{2},\cdots, a_{k})$ of positive integers, with $a_{1}\, \geq\, a_{2}\, \geq\, \cdots  \geq\, a_{k}$ and $ \sum_{i=1}^{k}a_{i}\,=\,m$. We define $ |\pi|\,:=\, k.$ Given two partitions $ \pi^{\prime}\,=\, (a_{1},a_{2},\cdots, a_{k})$, $ \pi\,=\, (b_{1},b_{2},\cdots, b_{\ell})$ of $m$, a refinement of $\pi^{\prime}$ by $\pi$  is a surjective map $ [\ell] \,\xlongrightarrow{\,\,\,r\,\,\,}\, [k] $ such that for all $ i\, \in\, [k],$ $ \sum_{j \in r^{-1}(i)}b_{j}\,=\, a_{i} $. Two refinements $ r, r^{\prime}$ of $\pi^{\prime}$ by $ \pi$ are said to be equivalent if for all $ i\, \in\, [k] $, $ (b_{j})_{j\in r^{-1}(i)}, (b_{j})_{j\in (r^{\prime})^{-1}(i)} $ are same partitions  of $a_{i}.$ The symbol $\pi\xrightarrow{r}\pi'$ means $r$ is a refinement of $\pi'$ by $\pi$. We say $\pi\, \geq\, \pi^{\prime}$ if there is a refinement of $ \pi $ by $ \pi^{\prime}$, $\pi\, >\, \pi^{\prime} \hspace{2pt} \textnormal{if} \hspace{2pt} \pi\, \geq\, \pi^{\prime}\hspace{2pt} \textnormal{and}  \hspace{3pt} \pi^{\prime} \neq \pi.$

Now fix a positive integer $n\,\geq\, 2$. For a smooth variety $Y$ of dimension $n$, and $ \pi\,=\, (a_{1},\cdots,a_{k})$ a partition of a positive integer $m,$ let $ W_{\pi,m}(Y)\,=\, \{ \sum_{i=1}^{k}a_{i}y_{i}\, \in\, S^{m}Y \, \, | \, \, y_{1},y_{2},\cdots,y_{k} \in Y\}$, $ W_{\pi,m}^{\circ}(Y)\,=\, W_{\pi,m}(Y) \backslash \underset {{\pi^{\prime}\, >\, \pi}}{\bigcup} W_{\pi^{\prime},m}(Y)$. One easily sees that $W_{\pi, m}(Y)$ is a closed subvariety of $ S^{m}Y$. These subvarieties already appeared in {\cite[Section 2.1]{Sh}} and {\cite[Proposition 3]{Gir}}. Our result is the following.

\begin{manualtheorem}{2}\label{B}
Let $ m,n\, \geq\, 2$ be integers, $Y$ be a smooth variety of dimension $n$.
\begin{enumerate}
   
\item[$(i)$] For a partition $ \pi\,=\, (\underbrace{a_{1},\cdots,a_{1}}_{r_{1}\text{-times}}, \underbrace{a_{2},\cdots,a_{2}}_{r_{2}\text{-times}},\cdots, \underbrace{a_{\ell},\cdots,a_{\ell})}_{r_{\ell}\text{-times}}$ of $m$, where $ a_{1}\,>\,a_{2}\,>\, \cdots\,>\, a_{\ell}$, $ W_{\pi,m}(Y)$ is a closed subvariety of $ S^{m}Y$ of dimension $n|\pi|,$ and normalization $ \prod_{i=1}^\ell S^{r_{i}}Y$.
\vspace{6pt}
\item[$(ii)$] $ \textnormal{Sing}\, W_{\pi,m}(Y)\,=\, \underset{\pi^{\prime}\, >\,\pi} {\bigcup}W_{\pi^{\prime},m}(Y).$ In other words, $W_{\pi,m}^{\circ}(Y)$ is the smooth locus of $ W_{\pi,m}(Y).$

\vspace{6pt}
\item[$(iii)$] $ \textnormal{Sing}^{t}(S^{m}Y)\, =\, \underset{|\pi|\,=\, m\,-\,t}{\bigcup}W_{\pi,m}(Y)\, =\, \{ \sum_{i}y_{i} \in S^{m}Y \, | \, \textnormal{ there are  at\,most}\, m\,-\,t \hspace{2pt} \textnormal{ distinct}\, y_{i}\textrm{'s}\}$.

\vspace{10pt}
\item[$(iv)$] $ \textnormal{Sing}^{t}(S^{m}Y)\setminus\textnormal{Sing}^{t+1}(S^{m}Y)\,=\, \underset{|\pi|\,=\,m\,-\,t}{\bigsqcup}W^{\circ}_{\pi,m}(Y)$ \\

$ \hspace{135pt}=\,\{ \sum_{i}y_{i}\in S^{m}Y \medspace| \medspace \textnormal{ there}  \textnormal{ are exactly $m\,-\,t$ distinct $y_{i}$\textrm{'s}} \}$.
\end{enumerate}
\end{manualtheorem}

Next, we compute the discrepancy of the singularity of symmetric power, which is an invariant of a variety important in birational geometry (see \cite{KM} for the definition of discrepancy). 

\begin{manualtheorem}{3}\label{C}
Let $ m,n \,\geq\, 2$ be integers, $Y$ be a smooth variety of dimension $n$. Then the discrepancy of $S^{m}Y$ is given by:
$$ \textnormal{discrep}\medspace (S^{m}Y)\,=\,\begin{cases} 0, &\textnormal{if}\medspace\,n\,=\,2\\
\frac{1}{2}, & \textnormal{if}\medspace\, n\,=\,3\\
1, & \textnormal{if} \medspace\, n\,\geq\,4.
\end{cases}$$

So, $S^mY$ always has canonical singularities, and has terminal singularities if and only if $n\,\geq\, 3.$
\end{manualtheorem}

\section{Local study of symmetric power}
It will be convenient to use the notion of \text{germ} of a complex analytic set. See {\cite[Chapter 2]{Hu}} for the notion of germs. For a point $p$ on a complex analytic set $X$ we denote the germ of $X$ at $p$ by $\langle X, p\rangle$.

Fix a positive integer $n$. Let $m$ be a positive integer and $Y$ a smooth variety of dimension $n$. Recall that for a partition $ \pi\,=\, (b_{1},\cdots,b_{k})$ of $m,$  $ W_{\pi,m}(Y)\,=\, \{ \sum_{i=1}^{k}b_{i}y_{i} \in S^{m}Y\,{|}\, y_{1},\cdots, y_{k}\in Y\}$, $ W_{\pi,m}^{\circ}(Y)\,=\, W_{\pi,m}(Y) \backslash \underset {{\pi^{\prime} \,>\, \pi}}{\bigcup} W_{\pi^{\prime},m}(Y)    $. Let  $W_{\pi,m}^{g}\,=\, \langle W_{\pi,m}(\mathbb{A}^{n}),0 \rangle$, $ X_{m}^{g}\,=\, \langle S^{m}\mathbb{A}^n, 0 \rangle, $  and $W_{\pi,m}^{g}\,=\, \langle W_{\pi,m}(\mathbb{A}^n),0 \rangle $, a subgerm of $ X_{m}^{g}.$ Here by a subgerm of a complex analytic set $X$ at a point $p\in X$, we mean germ of a closed complex analytic subset of $X$ passing through $p,$ and $0\in S^m \mathbb{A}^n$ corresponds to the effective $0$-cycle of degree $m$ supported on $\{0\}$. Let $(\underline{1})$
be the partition $ (\underline{1})\,=\, \underbrace{(1,1,\cdots,1)}_{m\text{-times}}$ and $(m)$ be the partition by a single part $m$ of $m$.
\vspace{7pt}

\begin{lemma}\label{prelim} In the previous notations we have the following:
\begin{enumerate}
\item[$(i)$] $W_{\pi, m}(Y)$ is a closed subvariety of $ S^{m}Y$ of dimension $n \hspace{1pt}|\pi|.$

\item[$(ii)$] $ W_{(\underline{1}),m}(Y)\,=\, S^{m}Y$, $ W_{(m),m}(Y)\, \cong\, Y.$ 
    
\item[$(iii)$] $ W_{\pi_{{1}},m}(Y)\, \subseteq\, W_{\pi_{2},m}(Y)$ if and only if $\pi_{2}\, \leq\, \pi_{1}$.\
    
\item[$(iv)$] $W_{\pi,m}(Y)\,=\,\underset{\pi^{\prime}\, \geq\, \pi}{\bigsqcup}\, W^{\circ}_{\pi^{\prime},m}(Y)$ as sets. 
\item[$(v)$] $W_{\pi_{1},m}(Y)\, \cap\, W_{\pi_{2},m}(Y)\,=\,\underset{\pi\, \geq\, \pi_{2}}{\underset{\pi\,\geq\,\pi_{1}}{\bigcup}} W_{\pi,m}(Y)$. 

\item[$(vi)$] If $ \pi\,=\, \underbrace{(a_{1},\cdots,a_{1}}_{r_{1}\text{-times}}, \underbrace{a_{2},\cdots, a_{2}}_{r_{2}\text{times}}, \cdots, \underbrace{a_{\ell},\cdots, a_{\ell}}_{r_{\ell}\text{-times}})$, where $ a_{1}\,>\, a_{2}\,>\, \cdots\,>\, a_{\ell}$, then the normalization of $W_{\pi,m}$ is $ \prod_{i=1}^{l}S^{r_{i}}Y.$
\end{enumerate} 
\end{lemma}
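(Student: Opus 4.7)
The plan is to derive all six parts from properties of the natural parametrization $\mu_\pi \colon Y^{|\pi|} \to S^m Y$ sending $(y_1, \ldots, y_{|\pi|}) \mapsto \sum_i b_i y_i$, where $\pi = (b_1, \ldots, b_{|\pi|})$, together with its factorization through $\prod_i S^{r_i} Y$ in the setup of (vi).

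For (i), I would factor $\mu_\pi$ as the closed immersion $Y^{|\pi|} \hookrightarrow Y^m$ that records each $y_i$ with multiplicity $b_i$, followed by the finite quotient $Y^m \to S^m Y$. Thus $\mu_\pi$ is finite, so its image $W_{\pi,m}(Y)$ is closed of dimension $n|\pi|$. Part (ii) is immediate from the definitions.

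For (iii), the direction $\pi_2 \leq \pi_1 \Rightarrow W_{\pi_1,m}(Y) \subseteq W_{\pi_2,m}(Y)$ is a direct rewrite: given a refinement $r$ of $\pi_1$ by $\pi_2$, one has $\sum_i a_i y_i = \sum_j b_j y_{r(j)}$. Conversely, picking a generic $p = \sum a_i y_i$ of $W_{\pi_1,m}(Y)$ with distinct $y_i$'s and writing $p = \sum b_j z_j$ inside $W_{\pi_2,m}(Y)$ forces each $z_j$ to coincide with some unique $y_{r(j)}$; matching coefficients then exhibits $r$ as a refinement of $\pi_1$ by $\pi_2$. Part (iv) follows by assigning to each $p \in W_{\pi,m}(Y)$ its unique reduced partition $\pi(p)$ obtained by merging coincident summands; the grouping map exhibits $\pi(p) \geq \pi$, and by construction $p$ lies in $W^\circ_{\pi(p),m}(Y)$. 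Part (v) is then a formal consequence of intersecting the disjoint decompositions from (iv) term by term.

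The heart of the lemma is (vi). I would construct $\phi \colon \prod_{i=1}^\ell S^{r_i} Y \to W_{\pi,m}(Y)$ by $(D_1, \ldots, D_\ell) \mapsto \sum_i a_i D_i$, where $a_i D_i$ means scaling every multiplicity in $D_i$ by $a_i$. The source is normal since each $S^{r_i} Y$ is a finite quotient of a smooth variety and products of normal varieties are normal. On the open locus where each $D_i$ is reduced and the $D_i$'s have pairwise disjoint supports, $\phi$ is a bijection onto $W^\circ_{\pi,m}(Y)$: the hypothesis $a_1 > \cdots > a_\ell$ is essential here, since each $D_i$ is recovered from $\sum_i a_i D_i$ as the subdivisor of points of multiplicity exactly $a_i$. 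Hence $\phi$ is birational. Finiteness of $\phi$ follows because $\mu_\pi$ factors as $\phi \circ q$, where $q \colon Y^{|\pi|} \to \prod_i S^{r_i} Y$ is the finite surjective quotient by $\prod_i S_{r_i}$, and $\mu_\pi$ is finite by (i). Thus $\phi$ is a finite birational morphism from a normal variety to $W_{\pi,m}(Y)$, which is precisely the normalization.

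The main obstacle is the finiteness descent in (vi): the claim that if $\phi \circ q$ is finite and $q$ is finite surjective then $\phi$ is finite. This is a standard but non-trivial step, provable via universal closedness (images of closed subsets under $\mu_\pi$ agree with those under $\phi$ by surjectivity of $q$) combined with the quasi-finiteness of $\phi$ on fibers.
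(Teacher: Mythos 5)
Your proposal is correct and follows essentially the same route as the paper: part (i) via the finite map $Y^{|\pi|}\to S^mY$, parts (ii)--(v) by elementary support/multiplicity arguments (which the paper dismisses as straightforward), and part (vi) by exhibiting the map $\prod_i S^{r_i}Y\to W_{\pi,m}(Y)$ as finite, surjective, and generically one-to-one from a normal source, hence the normalization. The only difference is that you spell out the finiteness of this map (descent from $\mu_\pi=\phi\circ q$ via universal closedness plus quasi-finiteness), which the paper simply asserts.
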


\begin{proof}
The proof of $(ii)-(v)$ is straightforward. For $(i)$, note that the map $$Y^{k}\,\longmapsto\, S^{m}Y $$ 
$$ (y_{1}, y_{2}, \cdots,y_{k})\, \longmapsto\, \sum_{i}a_{i}y_{i} $$ is finite and
has image $ W_{\pi,m}(Y)$. For $(vi)$, consider the map 
    $$ \prod_{i=1}^{\ell} S^{r_{i}}Y\, \xlongrightarrow{\,\,\,\alpha\,\,\,}\, W_{\pi,m}Y $$ 
    $$ ( \sum_{j=1}^{r_{i}}y_{ij})_{i} \,\longmapsto\, \sum_{i=1}^{l}\sum_{j=1}^{r_{i}}a_{i}y_{ij}.$$
    
    Note that $\alpha$ is surjective, finite, and $| \alpha^{-1}(p)|\, =\, 1$, for a general point $ p \,\in\, W_{\pi,m}(Y).$ So, $ \alpha $ is finite birational. Since $\prod_{i=1}^{\ell}S^{r_{i}}Y$ is normal, it follows that $ \alpha$ is a normalization map.

\end{proof}
\begin{lemma}\label{local structure}
    Let $ p\,\in\, S^{m}(Y)$ be given by $ p\,=\, \sum_{i=1}^{k}a_{i}y_{i},$ where $y_{i} $' s are distinct, and $ a_{1}\,\geq\, a_{2}\,\geq\, \cdots\, \geq\, a_{k}.$  Let $ \pi\,=\,(a_{1},a_{2},\cdots,a_{k})$. Then there is an isomorphism of germs 
    $\prod_{i=1}^{k} X_{a_{i}}^g\,\xrightarrow{\,\,\,\phi\,\,\,}\, \langle S^{m}Y, p \rangle$ such that the following holds:
    
    For any $\pi^{\prime}\,=\, (b_{1}, b_{2},\cdots,b_{\ell})$ such that $ \pi\, \geq\, \pi^{\prime}.$ Then we have $$ \phi^{-1}\langle W_{\pi^{\prime},m}(Y), p\rangle\, =\, \bigcup_{\pi^{\prime}\xrightarrow{r} \pi} \prod_{i=1}^{k} W_{r_{(i)},a_{i}}^{g}.$$ Here, $r$ runs over all refinements of $ \pi^{\prime}$ by $\pi,$ up to equivalence, and $r_{(i)}$ denotes the partition $ (b_{j})_{j\in r^{-1}(i)}$ of $a_i.$

\end{lemma}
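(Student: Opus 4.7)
The plan is to construct $\phi$ from the local product structure of $S^m Y$ at $p$, and then read off the pullback of $W_{\pi',m}(Y)$ by tracking how a nearby cycle distributes among the factors. Since $Y$ is smooth and $y_1,\dots,y_k$ are distinct, I would first choose pairwise disjoint analytic open neighborhoods $U_i\ni y_i$, each identified under a biholomorphism with an analytic neighborhood of $0\in\mathbb{A}^n$ sending $y_i$ to $0$. Fixing a labelling of the $m$ copies of $Y$, consider the open set $U_1^{a_1}\times\cdots\times U_k^{a_k}\subseteq Y^m$. Disjointness of the $U_i$ forces its stabilizer in $S_m$ to be exactly $S_{a_1}\times\cdots\times S_{a_k}$, acting by permuting the copies within each block, so its image in $S^m Y$ is an open neighborhood of $p$ analytically isomorphic to $\prod_i S^{a_i}U_i$. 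Passing to germs at the distinguished point produces the required isomorphism $\phi:\prod_{i=1}^k X_{a_i}^g\xrightarrow{\sim}\langle S^m Y,p\rangle$, and by construction the $i$-th projection of $\phi^{-1}$ just records the sub-cycle of any nearby $q\in S^m Y$ supported in $U_i$.

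Now I would analyze $\phi^{-1}\langle W_{\pi',m}(Y),p\rangle$ by unpacking the definition. A representative is a cycle $q=\sum_{j=1}^\ell b_j z_j$ sufficiently close to $p$; for such $q$, each $z_j$ lies in a unique $U_{r(j)}$, defining a map $r:[\ell]\to[k]$. The $i$-th factor of $\phi^{-1}(q)$ is $\sum_{j\in r^{-1}(i)}b_j z_j\in S^{a_i}U_i$, and comparing degrees in this factor with that of $\phi^{-1}(p)$ gives $\sum_{j\in r^{-1}(i)}b_j=a_i$; in particular $r^{-1}(i)\neq\emptyset$ for every $i$, so $r$ is a refinement of $\pi'$ by $\pi$. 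Moreover, by definition the $i$-th factor lies in $W_{r_{(i)},a_i}(U_i)$, so its germ at $0$ lies in $W_{r_{(i)},a_i}^g$. Conversely, any tuple in $\prod_i W_{r_{(i)},a_i}^g$ corresponds to a cycle in $S^m Y$ whose coefficient multiset is exactly $(b_1,\dots,b_\ell)$, which is precisely the condition for membership in $W_{\pi',m}(Y)$. Since each $W_{r_{(i)},a_i}^g$ is a closed analytic subgerm and $\phi$ is a biholomorphism, the set-theoretic equality upgrades to an equality of analytic subgerms.

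The union on the right-hand side is naturally indexed by equivalence classes of refinements: two refinements $r,r'$ yield the same subgerm $\prod_i W_{r_{(i)},a_i}^g$ if and only if, for each $i$, the multisets $(b_j)_{j\in r^{-1}(i)}$ and $(b_j)_{j\in (r')^{-1}(i)}$ coincide as partitions of $a_i$, which is exactly the definition of equivalence given in the introduction. The step I expect to be most delicate is ensuring that, for $q$ close enough to $p$, the assignment $j\mapsto r(j)$ is genuinely well-defined, i.e.\ that each $z_j$ falls into exactly one $U_i$; this is a continuity argument using that the $U_i$ are pairwise disjoint and that the support of $p$ is contained in $\bigsqcup_i U_i$, together with the fact that $W_{\pi',m}(Y)\to S^m Y$ is a closed embedding so limits behave correctly. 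Once that is in hand, the remainder of the proof is combinatorial bookkeeping translating refinements of partitions into the product decomposition provided by $\phi$.
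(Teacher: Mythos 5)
Your proof is correct and is essentially the paper's own argument: the paper likewise constructs $\phi$ by identifying $\langle S^mY,p\rangle$ with the quotient of $\langle Y^m,\underline{q}\rangle$ by the stabilizer $S_{a_1}\times\cdots\times S_{a_k}$, using disjoint coordinate balls around the $y_i$, and proves both inclusions exactly as you do, by assigning each support point of a nearby cycle $\sum_j b_j z_j$ to its unique ball and comparing degrees to see that the resulting map $r$ is a refinement. One cosmetic slip: membership in $W_{\pi',m}(Y)$ only requires \emph{some} representation $\sum_j b_j z_j$ with the $z_j$ not necessarily distinct, so your phrase ``coefficient multiset is exactly $(b_1,\dots,b_\ell)$'' is neither the right characterization nor needed --- the image of a tuple in $\prod_i W^{g}_{r_{(i)},a_i}$ visibly admits such a representation, which already gives the reverse inclusion.
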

\begin{proof}
First, we define the isomorphism $\phi$.  Since $Y$ is smooth, for each $y\in Y$ we have an isomorphism of germs: 
$$ \langle T_{y}Y,0 \rangle\, \xlongrightarrow{\,\cong\,}\,\langle Y, y \rangle .$$ For each $ y\,\in\, Y$, denote this isomorphism by $$ v\, \longmapsto\, y\,\oplus\, v.$$

Let $ \underline{q}\,=\, (\underbrace{y_{1},\cdots,y_{1}}_{a_{1}\text{-times}}, \underbrace{y_{2},\cdots,y_{2}}_{a_{2}\text{-times}},\cdots, \underbrace{y_{k},\cdots,y_{k})}_{a_{k}\text{-times}}\,\in\, Y^{m}.$ Under the action of $S_{m}$ on $Y^{m}$, $ H\,:=\,\textnormal{stab}(\underline{q})\,=\,S_{a_{1}} \,\times\, S_{a_{2}}\,\times\, \cdots\, \times\, S_{a_{k}},$ as $y_{i} \hspace{1pt}$'s are distinct. 
We have $ \langle Y^{m}, \underline{q} \rangle \,\cong\,\underset{i} {\prod} \langle Y^{a_{i}}, (y_{i},y_{i}, \cdots, y_{i})\rangle \,\cong\, \underset{i} {\prod}\langle(T_{y_{i}}Y)^{a_{i}}, \underline{0}\rangle $, and the action of $ H\,=\, \underset{i} {\prod}S_{a_{i}}$ on $ \underset{i} {\prod} \langle (T_{y_{i}}Y)^{a_{i}}, \underline{0}\rangle$ is just the product action. So, we have isomorphisms

$$\hspace{-1.3cm} \langle S^{m}Y,p \rangle\, \cong\, \langle Y^{m}, \underline{q} \rangle \big{/}H$$

    \hspace{185pt}$ \cong\, \underset{i} {\prod} \langle (T_{y_{i}}Y)^{a_{i}}, \underline{0}\rangle\big{/}S_{a_{i}}$

    $ \hspace{185pt}\cong\, \underset{i} {\prod}\langle S^{a_{i}}\mathbb{A}^n,0 \rangle$

    $ \hspace{185pt}\cong\, \underset{i} {\prod}X_{a_{i}}^{g}$.
    
Call this composition of isomorphisms $\phi\,:\,\prod_{i=1}^{k} X_{a_{i}}^g\longrightarrow \langle S^{m}Y, p \rangle$.
So, $ \phi$ is given by 
\begin{equation}\label{phi}
    \phi ((v_{i1}+ \cdots + v_{ia_{i}})_{i})\,=\, \sum_{i}(y_{i}\,\oplus\, v_{i1}+ \cdots + y_{i}\,\oplus\, v_{ia_{i}}).
\end{equation} 
Here, $v_{ij}\in T_{y_{i}}Y,$ and $ v_{i1}+\cdots +v_{ia_{i}}$ is considered as a zero cycle in $ T_{y_{i}}Y$ (it is not the vector addition).

Now we show that $\phi$ has the desired property.
Choose metrics on each $T_{y_{i}}Y$. Let $\epsilon \,>\,0$ be such that $ y_{i}\,\oplus\, v$ is defined for all $i$ and $\norm{v}\, <\, \epsilon, $ and $ y_{i}\,\oplus\, B_{\epsilon}(0)$' s are disjoint. Let $$\widehat{X_{a_{i}}^{g}}\,=\, \Big \{ \sum_{j=1}^{a_{i}}v_{ij} \in S^{a_{i}}(T_{y_{i}}Y) \hspace{1pt}\,\big{|}\, \hspace{1pt}  \norm{v_{ij}}\,<\, \epsilon  \hspace{0.3cm} \textnormal{for all}\,\, j \Big \}.$$ 
So, $\phi$ is represented by a holomorphic open map 
    $\underset{i} {\prod}\widehat{X_{a_{i}}^{g}}\, \xlongrightarrow{\,\,\,\widehat{\phi}\,\,\,}\, S^{m}Y$, which is given by the same formula as in  \eqref{phi}, and is an isomorphism onto its image, which we can call $ \widehat{S^{m}Y}.$ Let $ \widehat{W}_{r_{(i)},a_{i}}^{g}\,=\, \widehat{X_{a_{i}}^{g}} \,\cap\, W_{r_{(i)},a_{i}}^{g} $. It suffices to show $$ {\widehat{\phi}}^{-1}(W_{\pi^{\prime},m}(Y))\,=\, \bigcup_{\pi^{\prime}\xrightarrow{r} \pi} \prod_{i=1}^{k} \widehat{W}_{r_{(i)},a_{i}}^{g} .$$
     \underline{$ {\widehat{\phi}}^{-1}( W_{\pi^{\prime},m}(Y)) \supseteq\underset{\pi^{\prime}\xrightarrow{r} \pi}{\bigcup} \prod_{i=1}^{k} \widehat{W}_{r_{(i)},a_{i}}^{g}  $:} 
    
    Let  $ \pi^{\prime}\,=\, (b_{s})_{s},$ and $ \pi^{\prime}\xrightarrow{r} \pi$ be a refinement. Let $ \underline{v}\,=\,(\underset{s\in r^{-1}(i)}{\sum}b_{s}v_{is})_{i}$ be an element of $ \prod _{i=1}^k \widehat{W}_{r_{(i)},a_{i}}^{g}$, where $ v_{is}\in T_{y_{i}}Y.$ We have 
    $$ \widehat{\phi}(\underline{v})\,=\, \sum_{i} \sum_{s\in r^{-1}(i)}b_{s}y_{i}\, \oplus\, v_{is}\,=\, \sum_{s}b_{s}y_{r(s)}\,\oplus\, v_{r(s),s} \in W_{\pi^{\prime},m}(Y).$$
\underline{$ \widehat{\phi}^{-1}( W_{\pi^{\prime},m}(Y)) \,\subseteq\, \underset{\pi^{\prime}\xrightarrow{r} \pi}{\bigcup} \prod_{i=1}^{k} \widehat{W}_{r_{(i)},a_{i}}^{g}  $:}

    Let $ v_{ij} \in T_{y_{i}}Y$, for $1\,\leq\, i\,\leq\, k,\quad 1\,\leq\, j\, \leq\, a_i,$ be such that $ \underline{v}\,=\, (\sum_{j}v_{ij})_{i} \in \widehat{\phi}^{-1}(W_{\pi^{\prime},m}(Y)),$ i.e., $$ \sum_{i}\sum_{j}y_{i}\,\oplus\, v_{ij} \in W_{\pi^{\prime},m}(Y).$$ So, there are $z_{s}$'s in $Y$ such that 
    $$ \sum_{i=1}^{k} \sum_{j=1}^{a_{i}} y_{i}\,\oplus\, v_{ij}\,=\,\sum_{s=1}^l b_{s}z_{s}.$$
    Since  $ y_{i}\,\oplus\, B_{\epsilon}(0)$'s are disjoint, for each $s$ we have unique $ r(s)\in [k]$ such that $ z_{s} \in y_{r({s})}\, \oplus\, B_{\epsilon}(0).$ This gives a map $ r : [\ell]\longrightarrow [k],$ we have 
    
    \begin{equation}\label{sum}
        \sum_{i=1}^{k}\sum_{j=1}^{a_{i}} y_{i}\,\oplus\, v_{ij}\,=\, \sum_{i=1}^{k}\sum_{s\in r^{-1}(i)}b_{s}z_{s}.
    \end{equation}
    So, $$ \sum_{i=1}^k(\sum_{j=1}^{a_{i}} y_{i}\,\oplus\, v_{ij}\,-\, \sum_{s\in r^{-1}(i)}b_{s}z_{s})\,=\,0.$$
    The $0$-cycle $\sum_{j=1}^{a_{i}} y_{i}\,\oplus\, v_{ij}\,-\, \sum_{s\in r^{-1}(i)}b_{s}z_{s}$ is supported in $y_{i}\,\oplus\, B_{\epsilon}(0).$
    Since $ y_{i}\,\oplus\, B_{\epsilon}(0)$'s are disjoint, we must have 
    \begin{equation}\label{exponential}
\sum_{j=1}^{a_{i}}y_{i}\,\oplus\, v_{ij}\,=\, \sum_{s\in r^{-1}(i)}b_{s}z_{s}\quad \textnormal{for all} \hspace{3pt}i. 
\end{equation}

Taking degrees, we get $ a_{i}\,=\,\underset{s\in r^{-1}(i)}{\sum}b_{s}.$ Thus, $ r$ is a refinement. Since $$ B_{\epsilon}(0)\, \longrightarrow\, y_{i}\,\oplus\, B_{\epsilon}(0)$$
$$ v \longmapsto y_{i}\,\oplus\, v$$
is an isomorphism,  there are unique $ w_{s}\,\in\, B_{\epsilon}(0)$ such that $ z_{s}\,=\, y_{r(s)} \,\oplus\, w_{s}.$ Now, \eqref{exponential} gives $ \sum_{j=1}^{a_{i}} v_{ij}\,=\, \sum _{s\in r^{-1}(i)} b_{s}w_{s}.$ So, $ \sum_{j}v_{ij}\, \in\, \widehat{W}_{r_{(i)},a_{i}}^{g}$. Hence $ \underline{v}\, \in\, \prod_{i=1}^{k}\widehat{W}_{r_{(i)},a_{i}}^{g}$.
\end{proof}

As an immediate corollary we obtain
\begin{corollary}\label{deepest}
If $ p\,\in\, W_{(m),m}(Y),$ then $ \langle W_{\pi,m}(Y),p \rangle\, \cong\, W_{\pi,m}^{g}$.
\end{corollary}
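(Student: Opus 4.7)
The plan is to deduce this as a direct specialization of Lemma \ref{local structure} applied to the partition $(m)$. A point $p \in W_{(m),m}(Y)$ has the form $p = m \cdot y_0$ for a single $y_0 \in Y$, so in the notation of Lemma \ref{local structure} we take $k = 1$, $a_1 = m$, $y_1 = y_0$, and the partition attached to $p$ is $(m)$. The lemma then produces an isomorphism of germs $\phi \colon X_m^g \to \langle S^m Y, p\rangle$ (the product over $i$ collapses to a single factor).

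For the claim, I would then observe that $(m)$ is the maximum element of the refinement order: every partition $\pi$ of $m$ satisfies $(m) \geq \pi$, since the constant map $[|\pi|] \to [1]$ is a refinement of $(m)$ by $\pi$, and moreover this is the unique such refinement up to equivalence. Hence the hypothesis $\pi \geq \pi'$ in Lemma \ref{local structure} is automatic for all $\pi'$ of $m$, and the indexing set for refinements $\pi \xrightarrow{r} (m)$ in the formula contains exactly one element, for which $r_{(1)} = \pi$.

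Plugging this into the conclusion of Lemma \ref{local structure} gives
\[
\phi^{-1}\langle W_{\pi,m}(Y), p\rangle \;=\; \prod_{i=1}^{1} W_{r_{(i)},a_i}^{g} \;=\; W_{\pi,m}^{g},
\]
and since $\phi$ is an isomorphism of germs, restricting it yields the desired isomorphism $W_{\pi,m}^{g} \cong \langle W_{\pi,m}(Y), p\rangle$. There is no real obstacle here; the only point worth checking carefully is that $(m)$ is the unique maximum in the refinement order so that the union over refinements collapses to a single term, which is exactly what makes the corollary immediate.
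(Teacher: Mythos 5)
Your proposal is correct and is exactly the argument the paper intends: the corollary is stated as an immediate consequence of Lemma \ref{local structure}, obtained by specializing to $k=1$, $a_1=m$, where $(m)\geq \pi$ holds automatically and the unique refinement $[\,|\pi|\,]\to[1]$ collapses the union to the single term $W_{\pi,m}^{g}$. Your observation about uniqueness of the refinement up to equivalence is the right point to check, and nothing is missing.
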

We also have the following: 
\begin{corollary}\label{gorenstein}
Let $ m\,,\,n\, \geq\, 2$, and $Y$ be a smooth variety of dimension $n$. Then,  $K_{S^{m}Y}$ is Cartier if and only if $n$ is even. 
\end{corollary}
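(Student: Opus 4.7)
The plan is to reduce Cartierness of $K_{S^{m}Y}$ to a local analytic question via Lemma \ref{local structure}, and then settle that local question by a standard descent argument for the quotient singularity $\mathbb{A}^{nm}/S_{m}$.

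First, since Cartierness of a Weil divisor is a Zariski-local property that is detected on analytic germs, I will apply Lemma \ref{local structure} at an arbitrary point $p = \sum_{i=1}^{k} a_{i} y_{i}\in S^{m}Y$ with distinct $y_{i}$: the germ $\langle S^{m}Y,p\rangle$ splits as $\prod_{i=1}^{k} X_{a_{i}}^{g}$. Since the canonical of a product of germs is Cartier iff each factor's canonical is, and $X_{1}^{g}\cong \langle \mathbb{A}^{n},0\rangle$ is smooth, the question reduces to: for which $a\geq 2$ is $K_{S^{a}\mathbb{A}^{n}}$ Cartier at the origin? The answer will turn out to depend only on the parity of $n$ and not on $a$, so (taking $p = m\cdot y$ in one direction and running through all partitions in the other) $K_{S^{m}Y}$ is Cartier iff $K_{S^{m}\mathbb{A}^{n}}$ is Cartier at $0$.

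Next I will write $S^{m}\mathbb{A}^{n} = \mathbb{A}^{nm}/S_{m}$, where $S_{m}$ permutes the $m$ blocks of $n$ coordinates. For $n\geq 2$ the action has no pseudoreflections, since a transposition fixes the diagonal of two blocks, a locus of codimension $n\geq 2$, and longer permutations fix even smaller loci. Hence the quotient map is unramified in codimension one, and I will invoke the standard descent criterion (Kempf's descent lemma, equivalently the Reid--Tai formula for the Cartier index of $K$ at a quotient singularity): for a finite subgroup $G\subset GL(V)$ acting linearly on $V$ without pseudoreflections, $K_{V/G}$ is Cartier at the image of the origin iff the determinant character $\det\colon G\to\mathbb{C}^{*}$ is trivial, i.e. iff $G\subset SL(V)$.

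Finally, I will compute this character directly: a transposition $(i\ j)\in S_{m}$ acts on $\mathbb{A}^{nm}$ as a product of $n$ disjoint coordinate transpositions and therefore has determinant $(-1)^{n}$; thus $\det|_{S_{m}}=\mathrm{sgn}^{n}$, which is the trivial character iff $n$ is even (using $m\geq 2$). Combined with the reduction in the first step, this yields the corollary. The only point requiring any care is the reduction via the product decomposition in Lemma \ref{local structure}; the descent criterion and the determinant computation are routine background from quotient-singularity theory, so I do not anticipate any real obstacle.
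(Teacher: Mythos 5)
Your proposal is correct and follows essentially the same route as the paper: localize via Lemma \ref{local structure}, observe that the permutation action of $S_{a}$ on $(\mathbb{C}^{n})^{a}$ has no pseudo-reflections, and decide Cartierness of the canonical class by whether this representation lands in $SL$, i.e.\ by computing that a transposition has determinant $(-1)^{n}$. The only cosmetic difference is that the paper states the local criterion as Gorensteinness of the quotient germ and cites Watanabe's theorems, which for a pseudo-reflection-free linear action of a finite group is exactly your determinant/descent criterion.
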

\begin{proof}
$K_{S^{m}Y}$ is Cartier $$\Longleftrightarrow  \langle S^{m}Y, p\rangle\, \text{is Gorenstein for all }p\, \in\, S^{m}Y$$ $$\hspace{2cm} \hspace{0.7cm}\Longleftrightarrow X_{a_{i}}^{g} \text{ is Gorenstein for all } 2\,\leq\, a_{i}\,\leq\, m \text{ (by Lemma \ref{local structure}})$$ $$\hspace{4cm}\Longleftrightarrow \text{ for all }2\leq a_{i}\,\leq\, m,\text{ the action of } S_{a_{i}} \text{ on } (k^{n})^{a_{i}}\text{ by permutation}$$ $$\text{\hspace{0.9cm}is through a map } S_{a_{i}} \,\longrightarrow\, \textrm{SL}(na_{i},k)$$ $$\hspace{-5.2cm}\Longleftrightarrow\, n \text{ is even}.$$

The third equivalence follows from \cite{watanabe1974certain} and \cite{watanabe1974certain2}, noting that if $a_i\,\geq\, 2$, $S_{a_i}$ acts on $(k^{n})^{a_{i}}$ without pseudo-reflection.
\end{proof} 
\begin{remark}
    In {\cite[Lemma 7.1.7]{BK}}, it is proved that $S^m Y$ is always Gorenstein. But this statement is incorrect, and the proof of {\cite[Lemma 7.1.7 (ii)]{BK}} has a mistake: in they assume that the isomorphism $K_Y^{\boxtimes m}\,\cong\, K_{Y^m}$ is $S_m$-equivariant, but actually it is not when $n$ is odd.
\end{remark}

\begin{lemma}\label{Singular}
$W_{\pi,m}^{g}$ is singular if $ |\pi|\, >\,1$.
\end{lemma}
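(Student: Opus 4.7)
The plan is to argue by contradiction, assuming $W_{\pi,m}^g$ is smooth at the origin, hence normal at the origin. By Lemma~\ref{prelim}(vi), the normalization of $W_{\pi,m}(\mathbb{A}^n)$ is $N := \prod_{i=1}^{\ell}S^{r_i}\mathbb{A}^n$, where $a_1>\cdots>a_\ell$ group the distinct parts of $\pi$ with multiplicities $r_1,\ldots,r_\ell$. The normalization map $\alpha\colon N\to W_{\pi,m}(\mathbb{A}^n)$ is finite and birational, and an easy cycle-theoretic check shows its fibre over $0\in W_{\pi,m}(\mathbb{A}^n)$ is the single point $\underline{0}\in N$. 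Normality at $0$ then forces $\alpha$ to restrict to an isomorphism of analytic germs $\langle N,\underline{0}\rangle\xrightarrow{\sim} W_{\pi,m}^g$, from which I derive a contradiction by splitting on $\ell$.

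If $\ell=1$, then $\pi=(\underbrace{a,\ldots,a}_{r})$ with $r=|\pi|\geq 2$ and $N=S^r\mathbb{A}^n$. By Chevalley--Shephard--Todd, $S^r\mathbb{A}^n=(\mathbb{A}^n)^{r}/S_r$ is not smooth, because for $r,n\geq 2$ any non-identity element of $S_r$ fixes a subspace of $(\mathbb{A}^n)^r$ of codimension $\geq n\geq 2$ and so is not a pseudo-reflection. The singular locus of $S^r\mathbb{A}^n$ is $\mathbb{G}_m$-invariant, closed and nonempty, so it contains the fixed point $\underline{0}$. Thus $\langle N,\underline{0}\rangle$ is singular whereas $W_{\pi,m}^g$ is smooth by hypothesis, contradicting the isomorphism $\alpha$.

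If $\ell\geq 2$, I exploit the $\mathbb{G}_m$-action by scalar multiplication on $\mathbb{A}^n$, which makes $N$, $S^m\mathbb{A}^n$ and $W_{\pi,m}(\mathbb{A}^n)$ into affine cones with $\underline{0}$, $0$ as fixed points of strictly positive $\mathbb{G}_m$-weight, and $\alpha$ is equivariant. Consequently $\alpha^*$ is a graded map between the cotangent spaces at the origin. The degree-$1$ piece $(\mathfrak{m}_N/\mathfrak{m}_N^2)_1$ splits as a direct sum over the $\ell$ factors of $N$, each factor contributing the $n$-dimensional space of degree-$1$ $S_{r_i}$-invariants on $(\mathbb{A}^n)^{r_i}$, giving dimension $n\ell$ in total. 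On the other hand, $(\mathfrak{m}_W/\mathfrak{m}_W^2)_1=(\mathcal{O}_W)_1$ is spanned by the restrictions to $W$ of the $n$ degree-$1$ invariants of $(\mathbb{A}^n)^m$ (which pull back under $\alpha$ to $\sum_i a_i\,\sigma_{ij}$, $j\in[n]$, with $\sigma_{ij}$ the degree-$1$ invariant on the $i$-th factor of $N$), so this piece has dimension exactly $n$. Since $\alpha^*$ must be an isomorphism in each graded degree, one needs $n=n\ell$, contradicting $\ell\geq 2$.

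The main delicate point is justifying that the analytic cotangent at the origin is naturally graded by the $\mathbb{G}_m$-action and that $\alpha^*$ preserves this grading, so the dimension comparison can be made degree by degree. This follows from the strict positivity of weights at the fixed origins, which identifies $\mathfrak{m}/\mathfrak{m}^2$ with the indecomposable part of the graded algebraic coordinate ring and is compatible with analytic and formal completions. Once this is set up, both cases collapse into short dimension counts.
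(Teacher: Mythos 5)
Your proof is correct, and for the essential case it takes a genuinely different route from the paper's. Both arguments begin identically: assuming the germ is smooth, hence normal, the finite birational map $\alpha\colon \prod_{i}S^{r_i}\mathbb{A}^n \to W_{\pi,m}(\mathbb{A}^n)$ of Lemma \ref{prelim}$(vi)$ has one-point fibre over $0$ and therefore restricts to an isomorphism of germs at the origin. The paper uses this only to force $r_i=1$ for all $i$ (quoting the classical fact that $S^{r}\mathbb{A}^n$ is singular for $r,n\geq 2$), and then disposes of the remaining case of pairwise distinct parts by a computation upstairs on $Y^m$: the $S_m$-translates of the tangent space of the small diagonal span all of $U^m$, and the $S_m$-invariant linear map obtained from $\alpha^{-1}\circ f$ yields a contradiction precisely because the parts $a_i$ are distinct. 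You avoid that computation entirely: for $\ell\geq 2$ you compare the weight-one graded pieces of the cotangent spaces at the cone points, namely $n\ell$ for the normalization versus at most $n$ for $W_{\pi,m}(\mathbb{A}^n)$ (the linear $S_m$-invariants of $(\mathbb{A}^n)^m$ form an $n$-dimensional space), which is incompatible with a $\mathbb{G}_m$-equivariant germ isomorphism; and for $\ell=1$ you invoke Chevalley--Shephard--Todd together with the $\mathbb{G}_m$-invariance of the singular locus to see that $S^{r}\mathbb{A}^n$ is singular at the vertex, thereby making self-contained the fact the paper only quotes. The point you flag as delicate (identifying analytic with Zariski cotangent spaces, equivariance of $\alpha^{*}$, and the fact that positivity of the weights makes the degree-one piece of $\mathfrak{m}/\mathfrak{m}^2$ equal to the degree-one piece of the graded coordinate ring) is standard and you handle it correctly; note also that the weaker bound ``at most $n$'' already suffices, so the exactness claim is not needed. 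What your approach buys is brevity and uniformity—the graded dimension count replaces the paper's explicit invariant-theoretic differential argument, and it does not even require the preliminary reduction to $r_i=1$—while the paper's argument stays with elementary linear algebra and avoids appealing to the cone structure and to Chevalley--Shephard--Todd.
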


\begin{proof}
By Corollary \ref{deepest}, it suffices to show that for $ |\pi|\,>\,1,$ $ W_{\pi,m}(Y)$ is singular along a point $ p\,\in\, W_{(m),m}(Y)$ for a smooth variety $Y.$ Suppose $ W_{\pi,m}(Y)$ is smooth at $p$. Let $  \pi\,=\, (\underbrace{a_{1},\cdots,a_{1}}_{r_{1}\text{-times}}, \underbrace{a_{2},\cdots,a_{2}}_{r_{2}\text{-times}},\cdots, \underbrace{a_{k},\cdots,a_{k})}_{r_{k}\text{-times}}$, where $a_1\,>\,a_2\,>\,\ldots\,>\,a_k.$ By Lemma \ref{prelim}$(vi)$, we have the normalization map $$ \prod_{i}S^{r_{i}}Y\, \xlongrightarrow{\,\,\,\alpha\,\,\,}\, W_{\pi,m}(Y).$$ Let $ y\,\in\, Y$ be such that $ p\,=\,my.$ We have $ \alpha^{-1}(p)\, =\, (r_{1}y, r_{2}y,\cdots,r_{k}y).$ Since $ W_{\pi,m}(Y) $ is smooth at $p$, the normalization $ \prod_{i}S^{r_{i}}Y$ must be smooth at $ \alpha^{-1}(p).$ Hence, each $S^{r_{i}}Y$ must be smooth at $ r_{i}y.$ So, $ r_{i}\, = \,1$ for all $\hspace{2pt}i$.

Also, as $ W_{\pi,m}$ is smooth at $p,$ there is a smooth neighborhood $V$ of $p$ in $ W_{\pi,m}(Y),$ hence $$ \alpha^{-1}(V)\, \xlongrightarrow{\,\,\,\alpha\,\,\,}\, V$$  is an isomorphism. Let $f\,:\, Y^m\,\to\, S^mY$ be the projection. So, we have a commutative diagram 
\begin{center}
\begin{tikzcd}

f^{-1}V\arrow[rd, "g"] \arrow[d,"f"]
 \\
V \arrow[r, "\alpha^{-1}" ]
& |[, rotate=0]|  Y^{k}
\end{tikzcd},
\end{center}
 where $g$ is $S_{m}$-invariant. Let $ \underline{z}\,=\, (\underbrace{y,\cdots,y}_{m\text{-times}})\,\in\, f^{-1}(V)$ and $\underline{w}\,=\, (\underbrace{y,\cdots,y}_{k \text{-times}})\,\in\, Y^{k}.$ So, $f(\underline{z})\,=\,p$, $g(\underline{z})\,=\, \underline{w}$. 
 
 For a closed point $z$ on a variety $Z$, denote the Zariski tangent space of $Z$ at $z$ by $T_zZ$. Let $ U\,=\, T_{y}Y.$ 
Denote an element of $Y^{m}$ by $(y_{ij})_ {\underset{1\,\leq\, j\, \leq\, a_{i}}{1\,\leq\, i \,\leq\, k}}$
and an element of $ U^{m}$ by 
$ (v_{ij})_{\underset{1\,\leq\, j\,\leq\, a_{i}}{ 1\,\leq\, i\, \leq\, k}},$ where $y_{ij}\,\in\, Y, v_{ij}\,\in\, U.$ So, $S_m$ is regarded as the group of permutations of the index set $\{(i,j)\,\in\, \mathbb{Z}^2 \,|\, 1\,\leq\, i\,\leq\, k,\quad 1\,\leq\, j \,\leq\, a_i\}$ of $m$ elements.
Let $$ D\,=\, \{ (y_{ij})\,\in\, Y^{m} \hspace{4pt}| \hspace{4pt}y_{i1}\,=\,\cdots \,=\, y_{ia_{i}} \hspace{3pt} \textnormal{for all} \hspace{3pt} i\} \,\subseteq\, f^{-1}(W_{\pi,m}(Y)).$$

We have: $ T_{\underline{z}} f^{-1}(V)$ is an $S_{m}$-invariant subspace of $ T_{\underline{z}}Y^{m}\,=\, U^{m},$ containing 

$$ T_{\underline{z}}D\,=\, \{ (v_{ij})\,\in\, U^{m} \hspace{3pt} | \hspace{3pt} v_{i1}\,=\,\cdots \,=\,v_{ia_{i}} \hspace{3pt} \textnormal{for all}  \hspace{3pt} i \},$$ and hence containing $ \sum_{\sigma\,\in\, S_{m}}\sigma(T_{\underline{z}}D).$

$\underline{\textnormal{Claim}}:$ $ T_{\underline{z}}f^{-1}(V)\,=\, U^{m}.$
\begin{proof}
It suffices to show that $\sum_{\sigma\, \in\, S_{m}} \sigma(T_{z}D)\, =\, U^{m}.$ Suppose not. Then there is $ 0 \neq \lambda \,=\, (\lambda_{ij})_{i,j}\, \in\, (U^{m})^{\ast}\,=\, (U^{\ast})^{m}$ such that $ \lambda (\sigma(T_{\underline{z}}D))\, =\,0$ for all $\sigma\, \in\, S_{m}.$ So for all $\sigma\, \in\, S_{m}$, 
     $$ \lambda \circ \sigma \,\in\, U_{2}\,:=\, \{ \beta\, \in\, (U^{m})^{\ast} \hspace{3pt} | \hspace{3pt} \beta (T_{\underline{z}}D)\,=\,0 \}.$$
Clearly,
     $$ U_{2}\,=\, \{ (\beta_{ij})_{i,j}\,\in\, (U^{\ast})^{m} \hspace{3pt} | \hspace{3pt} \sum_{j}\beta_{ij}\,=\,0  \hspace{3pt} \textnormal{for all} \hspace{3pt} i \}.$$
     Note that, $ \lambda \circ \sigma\, =\, (\lambda _{\sigma(i,j)})_{i,j}.$ Let $ 1 \,\leq\, i,i^{\prime}\, \leq\, k$ be such that $ i \neq i^{\prime},$ and let $ 1 \,\leq\, j \,\leq\, a_{i}$ and $1\,\leq\, j^{\prime}\, \leq\, a_{i^{\prime}}$ be integers . Let $ \sigma\, \in\, S_{m}$ be the transposition of $ (i,j)$ and $(i^{\prime},j^{\prime}) .$ Since $ \lambda, \lambda \circ \sigma \, \in\, U_{2}$, we have $$ \sum _{1\, \leq\, s\, \leq\, a_{i}} \lambda_{i,s}\,=\,0 \,=\, \sum_{1\,\leq\, s \,\leq\, a_{i}, s\neq j} \lambda_{i,s} +\lambda_{i^{\prime},j^{\prime}}.$$ Thus, $ \lambda_{i,j}\, =\, \lambda_{i^{\prime},j^{\prime}}.$ Since, this holds for all $(i,j),(i^{\prime},j^{\prime})$ with $ i \neq i^{\prime}$, and we have $ k\,>\,1$, it follows that  all $ \lambda_{i,j}$' s are equal. However, since $ \sum_{j}\lambda_{i,j}\,=\,0 $ for all $i$, we must have $ \lambda_{i,j}\,=\,0 $ for all $ i,j$, a contradiction to $ \lambda \,\neq\, 0 $.   
     This completes the proof of the claim.
\end{proof}     

For $ \underline{y}\,=\, (y_{ij})\, \in\, D\, \cap\, f^{-1}V,$ we have $ \alpha g (\underline{y})\,=\, f(\underline{y})\,=\, \sum_{i}a_{i}y_{i1}\,=\, \alpha\big( (y_{i1})_{i}\big).$ So we have  $g(\underline{y})\,=\, (y_{i1})_{i}$ for all $\underline{y}\,\in\, D\, \cap\, f^{-1}(V).$ Hence, $ dg_{\underline{z}}((v_{ij}))\,=\, ((v_{i1}))_{i}$ for all $(v_{ij})\, \in\, T_{\underline{z}}D.$  If $ U^{m} \xlongrightarrow{p} U$ is the projection onto first factor, and  $ \lambda\, =\, p\circ dg_{\underline{z}},$ then $ \lambda({\underline{v}})\,=\, v_{1},$ where $$ \underline{v}\,=\,(\underbrace{v_{1},\cdots,v_{1}}_{a_{1}\text{-times}}, \underbrace{v_{2},\cdots,v_{2}}_{a_{2}\text{-times}},\cdots, \underbrace{v_{k},\cdots,v_{k})}_{a_{k}\text{-times}}\, \in\, U^{m}. $$ Also, since $g$ is $S_{m}$-invariant, so is $ dg_{\underline{z}},$ and so is $\lambda.$  

Let $ \lambda_{i,j} \,\in\, \textnormal{Hom}(U,U)$ be such that $ \lambda\, =\,( \lambda_{i,j})\,\in\, \textnormal{Hom}(U^{m},U) \,=\, \textnormal{Hom}(U,U)^m.$ Since $ \lambda$ is $S_{m}$-invariant, we have $ \lambda_{\sigma(i,j)}\,=\, \lambda_{i,j}$ for all $i,j$, and for all $ \sigma\, \in\, S_{m}.$ Thus, all $ \lambda_{i,j}$' s are same, say $ \mu\,=\, \lambda_{i,j}\hspace{2pt}\textnormal{for all} \hspace{2pt} i,j$.  So, $ \lambda\big( ( u_{ij}) \big)\,=\, \mu \big( \sum_{i,j}u_{ij} \big)$ for all $ (u_{ij})\, \in\, U^{m}.$ 

Now choose $v_{1} \neq 0$ in $U$. Since $ k\,>\,1$, we have 

$$ v_{1}\,=\, \lambda (\underbrace{v_{1},\cdots, v_{1}}_{a_{1} \text{-times}},0,\cdots,0)=\mu(a_{1}v_{1})\,= \,\lambda(\underbrace{0,\cdots, 0}_{a_{1} \text{-times}} ,\underbrace{\frac{a_{1}}{a_{2}}v_{1},\cdots, \frac{a_{1}}{a_{2}}v_{1}}_{a_{2} \text{-times}},0,\cdots,0)\,=\,0,$$ which is a contradiction. This proves the lemma.  
\end{proof}
\section{Picard and divisor class groups of symmetric power}

In this section, we prove Theorem \ref{A}.

\textit{Proof of Theorem $\ref{A}$}: Let $ Y^{m} \xlongrightarrow{f} X $  be the quotient map.
\begin{enumerate}
\item[$(i)$] This is already shown in \cite{Fo}.
\item[$(ii)$] Let 
\begin{equation*}
 U\,=\, \{ \sum_{i}y_{i}\,\in\, X \medspace | \medspace y_{i} \textnormal{' s are distinct} \},
\end{equation*}
  $\widetilde{U}\,=\, f^{-1}(U)\,=\, \{(y_{1},\cdots, y_{m})\,\in\, Y^m \medspace | \medspace y_{i}\textnormal{' s are distinct} \} .$ Since $S_m$ acts freely on $\widetilde{U}$, $f$ is finite {\'e}tale over $U.$ Let  $\textnormal{Pic}\,(\widetilde{U},S_{m})$, $(\textnormal{Pic}\, \widetilde{U})^{S_{m}}$ be the groups of $S_m$-linearized line bundles and $S_m$-invariant line bundles on $\widetilde{U}$, respectively. Since codimension of $ X\setminus U$ in $ X$ is $n\, \geq\, 2,$ we have Cl$\,X\,$\,=\,Pic $U$\,=\,$\textnormal{Pic}\,(\widetilde{U},S_{m})$ by {\cite[Chapter 2, Section 7, Proposition 2]{mumford1974abelian}}. Let $\textnormal{Pic}\,(\widetilde{U},S_{m})\xlongrightarrow{r} (\textnormal{Pic}\, \widetilde{U})^{S_{m}}$ b the map which forgets the $S_m$-linearization. We have an exact sequence,

\begin{equation}\label{git}
    1 \longrightarrow \textnormal{Hom}(S_{m},H^{0}(\widetilde{U}, \mathcal{O}_{\widetilde{U}}^{\ast})) \longrightarrow \textnormal{Pic}\,(\widetilde{U},S_{m})\xlongrightarrow{r} (\textnormal{Pic}\, \widetilde{U})^{S_{m}}.
 \end{equation}

Note that the codimension of $ Y^{m}\setminus \widetilde{U}$ in $ \widetilde{Y}$ is $n \,\geq\, 2.$
So, $ \textnormal{Pic}\,\widetilde{U}\,=\, \textnormal{Pic}\,(Y^{m}).$ So, we can identify $ \big(\textnormal{Pic}\, \widetilde{U})^{S_{m}} \,\cong\, \textnormal{Pic}\, X$ by {\cite[Lemma 3.5 and Proposition 3.6]{Fo}}. Let $ j\,:\, U\hookrightarrow X$ be the inclusion. So, $ \textnormal{Pic}\,X \xhookrightarrow{j^*} \textnormal{Pic}\, U $ is the standard map $ \textnormal{Pic}\, X \hookrightarrow  \textnormal{Cl}\, X.$ Also note that $r\circ j^*$ is the identity map on Pic $X$. So, \eqref{git} is exact on the right and $j^* $ splits the sequence. Finally, note that since the abelianization of $S_m$ is $\mathbb{Z}/2 \mathbb{Z}$, we have $\textnormal{Hom}(S_{m},H^{0}(\widetilde{U}, \mathcal{O}_{\widetilde{U}}^{\ast}))\,\cong\, \textnormal{Hom}(\mathbb{Z}/2 \mathbb{Z},H^{0}(\widetilde{U}, \mathcal{O}_{\widetilde{U}}^{\ast}))\,\cong\, \mathbb{Z}/2 \mathbb{Z}$.
\end{enumerate}
\begin{enumerate}
\item[$(iii)$] Since  $ \widetilde{U}\xlongrightarrow{f} U$ is finite {\'e}tale, we have $ f^{\ast}K_{U}\,=\, K_{\widetilde{U}}\,=\, {K_{Y^{m}}}\mid_{\widetilde{U}} \,=\,K_{Y}^{\boxtimes m} \mid_{\widetilde{U}}.$ Let Pic$^0Y$ denote the Picard variety of $Y$. Under the isomorphism $$ \text{Pic}(Y^m)\,\cong\, \text{Pic}(Y)^m\,\times\, \text{Hom(Alb}\, Y,  \text{Pic}^0Y)^{\binom{m}{2}}$$ as in \cite{Fo}, $K_{Y}^{\boxtimes m}\,\in\, \text{Pic}(Y^m)$ corresponds to $(K_Y, K_Y,...,K_Y)\,\times\, \underline{0}$. So, identifying $(\textnormal{Pic}\, \widetilde{U})^{S_{m}}\,\cong\, \text{Pic}(Y^m)^{S_{m}}$, and Cl $X\,=\,\textnormal{Pic}\,(\widetilde{U},S_{m})$ as in the proof of $(ii)$, we get $r(K_X)\,=\,(K_Y, 0)$ in \eqref{git}. So, there exists $\epsilon\, \in\, {\mathbb{Z}/2 \mathbb{Z}}$ such that under the isomorphism of $(ii)$, $K_{X}$ corresponding to $(K_{Y}, 0, \epsilon).$ Finally, note that $ \epsilon\,=\,0\text{ iff } K_{X}$ is Cartier iff $n$ is even (by Corollary \ref{gorenstein}). 
\end{enumerate}
\begin{remark}
    In this remark we give a large class of singular rational varieties with a polarised endomorphism which are not toric. This shows that the answer to {\cite[Question 4.4]{Fa}} is negative if one drops the assumption of smoothness of $X$, even if one assumes $X$ is rational. Let $Y$ be a smooth projective toric variety, $m, k\geq 2$ integers, $Y\hookrightarrow S^m Y$ the diagonal embedding. Scaling of the fan of $Y$ gives an endomorphism $f$ of $Y$ such that $f^* L=L^k$ for all line bundles $L$ on $Y$. $f$ induces an endomorphism $f_m$ of $S^mY$, whose restriction to $Y$ is $f$. As $\text{Pic}(Y)$ is free abelian, by Theorem \ref{A} the restriction gives an injection $\text{Pic}(S^mY)\to\text{Pic}(Y)$. So, $f_m^* M=M^k$ for all line bundles $M$ on $Y$. So, $f_m$ is a polarised endomorphism of $S^mY.$ Since $Y$ is rational, by \cite{Ma}, $S^m Y$ is rational.

    On the other hand $S^m Y$ is never toric. In fact, we show that dim Aut$^0(S^m Y)=$dim  Aut$^0(Y)$. Let $G$ be the closed subgroup of Aut $Y^m$ consisting of all automorphisms which descend to $S^m Y$. By {\cite[Proposition 9 and 12]{belmans2020automorphisms}}, the natural map $G\to $Aut $S^mY$ is surjective (The proof of {\cite[Proposition 9 and 12]{belmans2020automorphisms}} is valid for every smooth projective variety $X$, not only surfaces). So, dim Aut$^0(S^m Y)\leq$dim $G^0$. On the other hand, $G^0\subset G\cap Aut^0(Y^m)$. Since any automorphism in Aut$^0(Y^m)$ preserves the factors, any automorphism in $G\cap \text{Aut}^0(Y^m)$ must be induced by a single automorphism of $Y$. This shows dim $G^0\leq$ dim Aut$^0(Y)$. Clearly an automorphism of $Y$ induces an automorphism of $S^m Y$, so we have dim Aut$^0(Y)\leq $dim Aut$^0(S^m Y)$. Hence, we get the desired equality of dimensions.
\end{remark}
\section{Stratification of symmetric power}
In this section, we prove Theorem \ref{B}.

\textit{Proof of Theorem $\ref{B}$}:
   Part $(i)$ is already proven in Lemma \ref{prelim}.
    
$(ii)$  If $ p\,\in\, W_{\pi,m}^{\circ}(Y),$ then by Lemma \ref{local structure} we have, $$ \langle W_{\pi,m}(Y),p \rangle \,\cong\, \prod_{i=1}^{l} W_{(a_{i}),a_{i}}^{g} \,\cong\, \prod_{i=1}^{l} \langle \mathbb{A}^n, 0 \rangle $$  
     is smooth. If $ p \,\in\, W_{\pi^{\prime},m}(Y)$ for some $ \pi^{\prime}\,>\,\pi $, let $ \pi \xlongrightarrow{r}\pi^{\prime}$  be a refinement. We have $ \textnormal{dim}\medspace \prod_{i=1}^{l} W_{r_{(i)},a_{i}}^{g}\,=\,n |\pi|\,=\, \textnormal{dim} \medspace \langle W_{\pi,m}(Y),p \rangle ,$ and by Lemma \ref{local structure}, $ \prod_{i=1}^{l}W_{r_{(i)}, a_{i}}^{g}\, \subseteq\, \langle W_{\pi,m}(Y), p \rangle.$
So, $ \prod_{i=1}^{l}W_{r_{(i)},a_{i}}^{g} $ is a union of irreducible componets of $ \langle W_{\pi,m}(Y),p \rangle.$ Since $ \pi^{\prime} \neq \pi,$ there exists $i_{0}$
 such that $ |r_{(i_{0})}|\,>\,1.$ By Lemma \ref{Singular}, $ W_{r_{(i_{0})},a_{i_{0}}}^{g}$ is singular, so $ \prod_{i=1}^{l}W_{r_{(i)},a_{i}}^{g}$ is singular. This forces $ \langle W_{\pi,m}(Y), p \rangle $ to be singular, that is, $\langle W_{\pi,m}(Y)$ is singular at $p$. 

 $(iii)$ We prove the first equality by induction on $t$. For $ t\,=\,0,$ it is clear. Suppose it is true for $t$. We have

\hspace{17pt}$ \textnormal{Sing}^{t+1}(S^{m}Y)\,=\, \textnormal{Sing}(\textnormal{Sing}^{t}S^{m}Y)$\\
 
\hspace{87pt} $=\, \textnormal{Sing}( \underset{|\pi|\,=\, m\,-\,t}{\bigcup{}}W_{\pi,m}(Y) )$ \\

\hspace{87pt} $=\, \big(\underset{|\pi|\,=\, m\,-\,t}{\bigcup}\textnormal{Sing\,}W_{\pi,m}(Y)\big )\, \cup\, \big(\underset{\underset{\pi_{1}\neq \pi_{2}}{|\pi_{1}|,|\pi_{2}|\,=\, m\,-\,t,}{}}\,\bigcup\, (W_{\pi_{1},m}(Y)\, \cap\, W_{\pi_{2},m}(Y)) \big )$\\

\hspace{89pt}$=\,\underset{\underset{\textnormal{for some } |\pi|\, =\, m \,-\, t}{\pi^{\prime}\medspace \textnormal{such that}\medspace \pi'\, >\, \pi}}{\bigcup} W_{\pi', m}(Y)$ \\

[the last equality holds by  $(ii)$ and noting that 
 $W_{\pi_{1},m}(Y)\, \cap\, W_{\pi_{2},m}(Y)\, =\, \underset{\underset{\pi^{\prime}\, >\, \pi_{2}}{\pi^{\prime}\,>\,\pi_{1}}}\bigcup W_{\pi', m}(Y)$]

\hspace{87pt} $=\,\underset{|\pi^{\prime}|\,\leq\, m\,-\,t\,-\,1}{\bigcup}W_{\pi^{\prime},m}(Y)$\\

\hspace{87pt}$=\, \underset{|\pi^{\prime}|\,=\,m\,-\,t\,-\,1}{\bigcup}W_{\pi^{\prime},m}(Y).$\\

Here the last equality holds as for any $|\pi^{\prime}|\,\leq\, m\,-\,t\,-\,1$, there  is $\pi^{\prime\prime}$ with $|\pi^{\prime \prime}|\,=\,m\,-\,t\,-\,1$ and $ \pi^{\prime} \,\geq\, \pi^{\prime\prime}$, so  $W_{\pi^{\prime},m}(Y) \,\subseteq\, W_{\pi^{\prime\prime},m}(Y).$

This proves the first equality in $(iii).$ The second equality follows immediately.

The part $(iv)$ follows immediately from $(iii),$ and the observation that for a partition $ \pi^{\prime}$ of $m$, we have $\pi^{\prime}\,>\,\pi$ for some $|\pi|\,=\,m\,-\,t$ $\iff  |\pi^{\prime}|\, \leq\, m\,-\,t\,-\,1.$ This proves the theorem.

\begin{remark}
In this remark $S^m \mathbb{C}^{n+1}$  denotes the $m$'th symmetric power of the vector space $\mathbb{C}^{n+1}$, which is a vector space of dimension ${m+n}\choose{n}$. Let $Y\,=\,(\mathbb{P}^n)^*$, the linear system of hyperplanes in $\mathbb{P}^n.$ Regard $\mathbb{P}(S^m\mathbb{C}^{n+1})$ as the linear system of degree $m$ hypersurfaces in $\mathbb{P}^n$. By {\cite[corollary 3.3.11, theorem 3.1.12]{Ry}}, $S^mY$ is embedded as a subvariety of $\mathbb{P}(S^m\mathbb{C}^n)$, whose points correspond to the degree $m$ effective divisors which are sums of hyperplanes. So, for $\pi\,=\,(a_1,...,a_k)$, points in $W_{\pi,m}^{\circ}(Y)$ correspond to effective divisors of the form $\sum_i a_i H_i$, where $H_i$'s are distinct hyperplanes. For positive integers $a_1, a_2, ..., a_k, d_1, d_2,...,d_k$ with $\sum_i a_id_i=m,$ if $W_{\underline{a}, \underline{d},m}^{\circ}$ is the locally closed subvariety of  $\mathbb{P}(S^m\mathbb{C}^{n+1})$ whose points correspond to effective divisors of the form $\sum_i a_i D_i$, where $D_i$'s are distinct prime divisors in $\mathbb{P}^n$ of degree $d_i$, then $W_{\underline{a}, \underline{d},m}^{\circ}$'s give a stratification of $\mathbb{P}(S^m\mathbb{C}^n)$ extending the stratification of $S^mY$ given in Theorem \ref{B}.
\end{remark}

The following corollary is the analogue of {\cite[Theorem 1.4]{Ha}} and {\cite[Theorem 1.4]{sosna2013fourier}} for symmetric powers.
 \begin{corollary}
     Let $m,n,p,q\,\geq\, 2$ be integers, $X,Y$ smooth varieties of dimension $m,n$ respectively. Suppose $S^pX\,\cong\, S^q Y$. Then $m\,=\,n, p\,=\,q, X\,\cong\, Y.$
 \end{corollary}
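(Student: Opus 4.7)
\textit{Plan.} The strategy is to recover both the integer $p$ and (up to isomorphism) the variety $X$ from the iterated singular locus stratification of $S^pX$ described in Theorem \ref{B}, then invoke the fact that an isomorphism of varieties is compatible with the operator $\textnormal{Sing}^t$.

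First I would extract $p$ from $S^pX$. By Theorem \ref{B}$(iii)$,
$$\textnormal{Sing}^t(S^pX) \,=\, \bigcup_{|\pi|\,=\,p\,-\,t} W_{\pi,p}(X),$$
and this is nonempty if and only if there exists a partition of $p$ with $p-t$ parts, i.e.\ $1 \leq p-t \leq p$. Hence the smallest $t \geq 0$ for which $\textnormal{Sing}^t(S^pX)$ is empty equals $p$ (this uses $p \geq 2$, so that $S^pX$ is itself singular and the filtration is nontrivial). Since any isomorphism of varieties carries the singular locus to the singular locus, this smallest $t$ is an isomorphism invariant. Running the same argument for $S^qY$ and comparing via the assumed isomorphism $S^pX\,\cong\,S^qY$ forces $p\,=\,q$.

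Next I would extract $X$ itself. The unique partition of $p$ with exactly one part is $(p)$, so Theorem \ref{B}$(iii)$ combined with Lemma \ref{prelim}$(ii)$ gives
$$\textnormal{Sing}^{p-1}(S^pX) \,=\, W_{(p),p}(X)\,\cong\, X,$$
and analogously $\textnormal{Sing}^{q-1}(S^qY)\,\cong\,Y$. The isomorphism $S^pX\,\cong\,S^qY$ restricts to an isomorphism between the $(p-1)$-th and $(q-1)$-th iterated singular loci (using $p=q$), so $X\,\cong\,Y$, and in particular $m\,=\,\dim X\,=\,\dim Y\,=\,n$.

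I do not expect any real obstacle: once Theorem \ref{B} is available, the corollary is essentially a matter of reading off the deepest stratum of the iterated singular locus filtration. The only minor point to justify is functoriality of $\textnormal{Sing}^{(\cdot)}$ under isomorphisms of (reduced) varieties, which is standard. It is worth remarking that the hypotheses $m,n\,\geq\, 2$ are essential here; without them $S^pX$ could be smooth (as for $m=1$) and both the depth invariant and the identification of $X$ as a component of the deepest stratum would break down.
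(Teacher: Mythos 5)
Your proposal is correct and follows essentially the same route as the paper: both read off $p$ as the depth of the iterated singular locus filtration (via $\textnormal{Sing}^{p-1}(S^pX)\cong X$ and $\textnormal{Sing}^{p}(S^pX)=\varnothing$ from Theorem \ref{B}) and recover $X$ as the deepest stratum, the only cosmetic difference being that you deduce $m=n$ from $X\cong Y$ directly while the paper compares $\dim S^pX=\dim S^qY$.
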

 \begin{proof}
     Let $\phi: S^pX\,\to\, S^q Y$ be an isomorphism. So, $\phi($Sing$^t (S^pX))\,=\,$ Sing$^t(S^qY)$ for all $t\,\geq\, 0.$ By Theorem \ref{B}, we have
     $$\textnormal{Sing}^{p\,-\,1} S^pX \,\cong\, X, \quad \textnormal{Sing}^p S^pX\,=\, \varnothing,$$ and $$\textnormal{Sing}^{q\,-\,1} S^qY \,\cong\, Y, \quad \textnormal{Sing}^q S^qY\,=\, \varnothing.$$ Hence, $p\,=\,q,\, X\,\cong\, Y.$ Comparing the dimensions of $S^pX$ and $S^pY$ one gets $pm\,=\,pn$, hence $m\,=\,n.$
 \end{proof}
 
\begin{remark}\label{tangent space}
    By Lemma \ref{local structure} and {\cite[Proof of Theorem 3.1.1]{Sh}}, it follows that for a partition $\pi\,=\,(a_1, a_2,...,a_k)$ of $m$ and $p\,\in\, W_{\pi, m}^0(Y),$ the dimension of the Zariski tangent space of $S^mY$ at $p$ is given by $\sum_{i=1}^k {{n+a_i}\choose {a_i}}\,-\,k.$
\end{remark}
\section{Discrepancy of symmetric power}

      In this section, we prove Theorem \ref{C}.
      
\textit{Proof of Theorem $\ref{C}$}:
 By, Lemma \ref{local structure}, we can assume $ Y \,=\,\mathbb{A}^n,$ and show the theorem for a neighborhood of $ \underline{0}\, \in\, S^{m} \mathbb{A}^n.$ By a similar argument as in {\cite[\S\, 3]{KX}}, for $ 1 \neq \sigma\, \in\, S_{m}$ we have $\textnormal{age}(\sigma)\, =\,\frac{m\,-\,\gamma}{2}\cdot n,$ where  $\gamma$ is the number of cycles in $ \sigma$. Minimum age  of $ \sigma \,\in\, S_{m}$ with  $ \sigma \neq 1$ is thus $ \frac{n}{2}.$ Now the theorem follows from Reid-Tai criterion as in \cite{IR}. In fact, the proof of the Reid-Tai criterion given there shows that in our situation,
$$ \textnormal{discrep}  \medspace S^m \mathbb{A}^n\,=\,\min\{\min_{1\, \neq\, \sigma\, \in\, S_m} \textnormal{age} (\sigma)-1, 1\} .$$
\section{Acknowledgements}\vskip 5mm We thank Prof. János Kollár and Prof. D. S. Nagaraj for their insightful discussions.
\printbibliography
 \end{document}